\def\.{\cdot}
\def\d{{\delta}}
\def\vs{\vskip .6cm}
\def\la{\langle}
\def\ra{\rangle}
\def\beq{\begin{equation}}
\def\eeq{\end{equation}}
\def\bea{\begin{eqnarray*}}
\def\eea{\end{eqnarray*}}
\def\beaa{\begin{eqnarray}}
\def\eeaa{\end{eqnarray}}
\def\ba{\begin{array}}
\def\ea{\end{array}}
\def\L{\Lambda}
\def\be{\begin{equation}}
\def\ee{\end{equation}}
\def\Sym{\mathrm{Sym}}
\def\R{\mathrm{R}}
\def\Sym{\mathrm{Sym}}
\def\T{\mathrm{\,T}}
\def\lr{\,\lrcorner\,}
\def\dd{\mathrm{d}}
\def\LL{\mathrm{L}}
\def\LieD{ \mathcal{L}}
\def \R{\mathbb{R}}
\newcommand{\floor}[1]{\left\lfloor#1\right\rfloor}
\newcommand{\set}[1]{\left\{#1\right\}} 
\newcommand{\parens}[1]{\left( #1 \right)}
\newcommand{\suchthat}{\;\middle|\;}
\newtheorem{epr}{Proposition}[section]
\newtheorem{ath}[epr]{Theorem}
\newtheorem{elem}[epr]{Lemma}
\theoremstyle{definition}
\title[Killing tensors on tori]{Killing tensors on tori}
\address{Konstantin Heil\\
Institut f\"ur Geometrie und Topologie \\
Fachbereich Mathematik\\
Universit{\"a}t Stuttgart\\
Pfaffenwaldring 57 \\
70569 Stuttgart, Germany
}
\email{konstantin.heil@mathematik.uni-stuttgart.de}
\author{Konstantin Heil, Andrei Moroianu, Uwe Semmelmann}
\address{Andrei Moroianu \\ Laboratoire de Math\'ematiques de Versailles, UVSQ, CNRS, 
Universit\'e Paris-Saclay, 78035 Versailles, France }
\email{andrei.moroianu@math.cnrs.fr}
\address{Uwe Semmelmann\\
Institut f\"ur Geometrie und Topologie \\
Fachbereich Mathematik\\
Universit{\"a}t Stuttgart\\
Pfaffenwaldring 57 \\
70569 Stuttgart, Germany
}
\email{uwe.semmelmann@mathematik.uni-stuttgart.de}
\date{\today}
\begin{document}

\begin{abstract}
We show that Killing tensors on conformally flat $n$-dimensional tori whose conformal factor only depends on one variable, are polynomials in the metric and in the Killing vector fields. In other words, every first integral of the geodesic flow polynomial in the momenta on the sphere bundle of such a torus is linear in the momenta.
\vs
\noindent 2010 {\it Mathematics Subject Classification}: Primary: {53C25, 53C27, 53C40, 53D25}
\smallskip

\noindent {\it Keywords}: {Killing tensors, geodesic flow, integrable systems.} 
\end{abstract}

\maketitle

\section{Introduction}

Killing tensors are symmetric $p$-tensors with vanishing symmetrized covariant derivative and correspond to Killing vector fields for $p=1$. 
Originally, Killing tensors were studied in the physics literature since they define first integrals (polynomial in the momenta)
of the equation of motion, and thus functions constant on geodesics. This property makes Killing tensors very 
important in the theory of integrable systems.

First integrals of the geodesic flow on the $2$-dimensional torus is an intensively studied topic. 
The description of metrics with linear first integrals, i.e. Killing vector fields, is obvious. There also is  
a classification of metrics with quadratic first integrals, i.e. with Killing (non-parallel) $2$-tensors. These metrics 
turn out to be of Liouville type, i.e. in conformal coordinates the metric can be written as 
$\tilde g=(f(x)^2 + g(y)^2)(\dd x^2 + \dd y^2)$  cf. \cite{kolo82}. Surprisingly, the existence of
first integrals of degree $\ge 3$ independent of those of degree $1$ and $2$ on a 2-torus is a completely open problem. The conjecture is 
that there are no such first integrals cf.  \cite{bofoma98}. During the  last thirty years many partial results were proved 
supporting this conjecture cf.  \cite{bia87}, \cite{koz85}, \cite{koz95}.

It follows from the theory of integrable systems that besides the metric, it is not possible to have two other functional independent first integrals on the 2-torus cf. \cite{mat11}. Indeed, in this situation the geodesic flow would be {\it superintegrable}. Then its trajectories would lie in the intersection 
of three level sets, one of which is the compact sphere bundle. One can conclude that all geodesics have to be closed. Then the manifold has the homology ring of a rank one symmetric space, which 
is not the case for the torus. 

In particular this means that if the metric $\tilde g$ on $T^2$ carries a Killing vector field, then any Killing tensor of
higher degree is functional dependent of it, i.e. it is expressible as polynomial in the Killing 
vector field and the metric. Since every metric $\tilde g$ on $T^2$ is conformal to the flat metric $g$, and has a Killing vector field if and only if the conformal factor only depends on one coordinate of $T^2$, the above fact can be equivalently stated as follows: {\em Every Killing tensor on the torus $T^2$ equipped with a metric of the form $\tilde g= e^{2f(y)}(\dd x^2 +  \dd y^2)$ is a polynomial in the Killing vector field $\frac\partial{\partial x}$ and in the metric tensor $\tilde g$. }

In our article we generalize this fact to the $n$-dimensional torus. Our main result is

\begin{ath} \label{main}
Let $K$ be a Killing tensor on the torus $T^n$ equipped with a metric of the form $\tilde g= e^{2f(x_n)}(\dd x_1^2 + \ldots + \dd x_n^2)$.
Then $K$ is a polynomial in the Killing vector fields $\frac\partial{\partial x_1},\ldots, \frac\partial{\partial x_{n-1}}$ and in the metric tensor $\tilde g$. 
\end{ath}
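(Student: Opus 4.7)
\emph{Plan.} I would work with the symbol of the Killing tensor, $P(x,\xi):=K^{i_1\ldots i_p}(x)\xi_{i_1}\cdots\xi_{i_p}$, a polynomial of degree $p$ in $\xi$ on $T^*T^n$. The Killing equation for $K$ is equivalent to the Poisson bracket identity $\{H,P\}=0$, where $H(x,\xi)=\tfrac12\tilde g^{ij}\xi_i\xi_j=\tfrac12 e^{-2f(x_n)}|\xi|^2$ is the geodesic Hamiltonian. Since the symbols of $\partial/\partial x_i$ and of $\tilde g$ are $\xi_i$ and $2H$ respectively, the theorem is equivalent to $P\in\mathbb{R}[\xi_1,\ldots,\xi_{n-1},H]$. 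Translation invariance of $\tilde g$ in $x':=(x_1,\ldots,x_{n-1})$ allows the Fourier decomposition
\[
P(x,\xi)=\sum_{\eta\in\mathbb{Z}^{n-1}}P_\eta(x_n,\xi)\,e^{2\pi i\langle\eta,x'\rangle},\qquad P_\eta(x_n,\xi)=\sum_{k=0}^{p}Q^{\eta}_{k}(x_n,\xi')\,\xi_n^{k},
\]
in which $Q^\eta_k$ is polynomial in $\xi'$ of degree $\le p-k$ and smooth periodic in $x_n$. Separating $\{H,P\}=0$ mode by mode and collecting powers of $\xi_n$ yields, for each $\eta$, a finite triangular system of first-order linear ODEs in $x_n$ for the family $\{Q^\eta_k\}_{k=0}^{p}$.

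\emph{Main step: $P_\eta=0$ for $\eta\neq 0$.} This is where I expect the real work. At the top of the system $\partial_{x_n}Q^\eta_p=0$, so $Q^\eta_p$ is a constant $c$ (its $\xi'$-degree is zero). The next equation,
\[
\partial_{x_n}Q^\eta_{p-1}=-2\pi i\,e^{-2f(x_n)}\langle\eta,\xi'\rangle\,c,
\]
integrated over one period and using $\int_0^1 e^{-2f}>0$, yields the polynomial identity $\langle\eta,\xi'\rangle\,c=0$ in $\mathbb{R}[\xi']$; since $\langle\eta,\xi'\rangle\neq 0$ for $\eta\neq 0$ and the polynomial ring is an integral domain, $c=0$. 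Iterating top-down, at each subsequent level the surviving equation has the shape $\partial_{x_n}Q^\eta_{k-1}=-2\pi i\,e^{-2f}\langle\eta,\xi'\rangle Q^\eta_k$ (the $f'$-contributions couple only to $Q^\eta_{k+1}$, already killed), and the same periodicity/integral-domain argument kills $Q^\eta_k$; the final step $k=0$ uses the purely algebraic level-$0$ relation. Hence $P_\eta\equiv 0$ for every $\eta\neq 0$.

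\emph{The zero mode.} For $\eta=0$ the Poisson operator reverses parity in $\xi_n$, so the even and odd parts of $P_0$ satisfy independent equations. The odd part is killed by a chain argument starting from the level-$0$ identity $f'|\xi'|^2 Q_1=0$ and propagating upward through the recursion, using essentially that $f$ is non-constant (an assumption implicit in the theorem, as constant $f$ would make $\partial/\partial x_n$ Killing and require an extra generator). The even recursion $\partial_{x_n}Q_{2j}=-(2j+2)f'e^{-2f}|\xi'|^2 Q_{2j+2}$ integrates explicitly to give $Q_{2j}(x_n,\xi')$ as a polynomial in $e^{-2f(x_n)}$ whose coefficient pattern matches, level by level, the binomial expansion of $H^{(p-2j)/2}=2^{-(p-2j)/2}(\xi_n^2+e^{-2f}|\xi'|^2)^{(p-2j)/2}$; recognizing this identification yields $P_0\in\mathbb{R}[\xi',H]$, which combined with the vanishing of non-zero Fourier modes gives Theorem~\ref{main}.
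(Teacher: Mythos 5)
Your strategy is sound and genuinely different from the paper's. The paper stays on the tensor side: it uses conformal invariance of the conformal Killing equation to pass to the flat metric, the standard trace decomposition $K=\sum_j \LL^j K_j$, the quoted fact that trace-free conformal Killing tensors on the flat torus are parallel, and an integration-by-parts argument to show all components are constant in $x_1,\ldots,x_{n-1}$; the ``odd'' part is then killed via an ODE system whose solutions are polynomials in $e^{-2f}$. Your Hamiltonian/Fourier version reaches the same three milestones --- only the zero Fourier mode in $x'$ survives, the part of $P$ odd in $\xi_n$ vanishes, the even part is a polynomial in $H$ --- by direct computation with $\{H,P\}=0$, which is more elementary and self-contained (no appeal to Proposition 6.6 of \cite{aku}, no trace decomposition). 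Both proofs ultimately rest on the same two compactness inputs: integration over a period, and non-constancy of $f$ (which you rightly flag as an implicit hypothesis).

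Two points need repair. First, your recursions are miscopied: since $\{H,P\}=e^{-2f}\bigl(\sum_i\xi_i\partial_{x_i}P+f'|\xi|^2\partial_{\xi_n}P\bigr)$ and $|\xi|^2=|\xi'|^2+\xi_n^2$, the coefficient of $\xi_n^k$ in the $\eta$-mode is
$$\partial_{x_n}Q_{k-1}+(k-1)f'Q_{k-1}+2\pi i\langle\eta,\xi'\rangle Q_k+(k+1)f'|\xi'|^2Q_{k+1}=0\ .$$
The diagonal term $(k-1)f'Q_{k-1}$, which you drop, means the top equation is $\partial_{x_n}Q_p+pf'Q_p=0$, so $Q_p=c\,e^{-pf}$ is \emph{not} constant, and the $f'$-contributions do not couple only to $Q_{k+1}$. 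The fix is routine --- multiply level $k$ by the integrating factor $e^{(k-1)f}$; periodicity of $e^{(k-1)f}Q_{k-1}$ and $\int_0^1 e^{-f}>0$ still force $c=0$ --- but the system as written is wrong. Second, ``recognizing'' the even zero mode as an element of $\R[\xi',H]$ is the actual content of the theorem and cannot be left as pattern-matching: it requires a downward induction on the $\xi_n$-degree, at each stage subtracting $R_j(\xi')(2H)^j$ where $R_j$ is the integration constant produced by $\partial_{x_n}(e^{2jf}Q_{2j})=-(2j+2)f'|\xi'|^2e^{2jf}Q_{2j+2}$, and checking the remainder still solves the recursion with one fewer level. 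Likewise the odd chain needs the observation that each $e^{(2j+1)f}Q_{2j+1}$ is a polynomial in $e^{-2f}$, so that $f'|\xi'|^2Q_1=0$ forces $Q_1\equiv 0$ everywhere (a polynomial vanishing on the infinite range of $e^{-2f}$ is zero); otherwise you only get vanishing on $\set{f'\neq 0}$ and must invoke, as the paper does, that Killing tensors agreeing on a non-empty open set agree everywhere.
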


The idea of the proof is as follows. We first translate the Killing equation on the flat torus. Next, using the formalism developed in \cite{aku}, we show that the components of any Killing tensor with respect to the flat metric are constant functions in $x_1,\ldots,x_{n-1}$. The Killing equation then reduces to a system of ordinary differential equations with polynomial solutions, which translated back to the metric $\tilde g$ yields the result. 

Note that for dimensional reasons, the result above cannot be proved by the above arguments from the theory of integrable systems for $n\ge 3$. Indeed, assuming that $K$ is a Killing tensor on $(T^n,\tilde g)$, functionally independent on the Killing vector fields $\xi_1,\ldots,\xi_{n-1}$ and on the metric tensor $\tilde g$, then one would obtain $n+1$ first integrals of the geodesic flow on the tangent bundle of $(T^n,\tilde g)$, but this no longer implies superintegrability since $n+1<2n-1$ for $n\ge 3$.

{\sc Acknowledgments.} This work was initiated during a ``Research in Pairs'' stay at the Mathematisches Forschungsinstitut, Oberwolfach, Germany and  partially supported by the Procope Project No. 32977YJ. We also thank Vladimir Matveev for very useful comments.

\section{Preliminaries}

We will use the formalism introduced in our article \cite{aku}. For the convenience of the
reader, we recall here the standard definitions and formulas which are relevant in the sequel.

Let $(\T M, g)$ be the tangent bundle of a $n$-dimensional Riemannian manifold $(M,g)$. We denote with
$\Sym^p \T M \subset \T M^{\otimes p}$ the $p$-fold symmetric tensor product of $\T M$. The elements of $\Sym^p \T M$
are linear combinations of symmetrized tensor products
$$
v_1 \cdot \ldots \cdot   v_p := \sum_{\sigma \in S_p} \, v_{\sigma(1)} \otimes \ldots \otimes v_{\sigma(p)} \ ,
$$
where $v_1, \ldots, v_p$ are tangent vectors in $\T M$. 

Let $\{e_i\}$ denote from now on a local orthonormal frame of $(\T M, g)$. Using the metric $g$, we will identify $\T M$ with $\T^* M$ and thus $\Sym^2 \T^* M\simeq\Sym^2\T M$. Under this identification we view the metric tensor as a symmetric 2-tensor
$\LL := 2 g = \sum e_i \cdot e_i$.
The scalar product $g$ induces a scalar product, also denoted by $g$, on $\Sym^p \T M$ defined by 
$$
g(v_1 \cdot \ldots \cdot v_p,w_1 \cdot \ldots \cdot w_p) \;:=\; \sum_{\sigma \in S_p}
\, g(v_1, w_{\sigma(1)}) \cdot \ldots  \cdot g(v_p, w_{\sigma(p)})\ .
$$
Using this scalar product, every element $K$ of $\Sym^p \T M$ can be identified with  a polynomial map of degree $p$ on $\T M$,
defined  by the formula 
$K(v_1,\ldots,v_p)=g(K,v_1\cdot\ldots\cdot v_p) .
$
The metric adjoint of the bundle homomorphism
$
v\cdot: \Sym^p \T M \rightarrow \Sym^{p+1} \T M, \;  K \mapsto v \cdot K
$
is the contraction map
$
v\lrcorner : \Sym^{p+1} \T M \rightarrow \Sym^{p} \T M, \;  K \mapsto v \lrcorner \, K
$, defined by $$(v \lrcorner \, K) (v_1, \ldots , v_{p-1}): = K(v, v_1, \ldots, v_{p-1})\ .$$

\noindent
The metric adjoint of $L\cdot:\Sym^{p} \T M \rightarrow \Sym^{p+2} \T M$ is the bundle homomorphism
$$
\L : \Sym^{p+2}\T M \rightarrow \Sym^{p} \T M , \quad K \mapsto  \sum e_i \lrcorner \,  e_i \lrcorner \,  K\ .
$$
The following commutator formulas are straightforward:
\beq \label{commu2}
[\, \Lambda, \, v \,\cdot \, ] \;=\; 2\, v \,  \lrcorner  \, , \quad
[\, v \lrcorner \,,\, \LL  \,  \,] \;=\; 2\, v \cdot\, ,\quad
[\,\L , \, v \lrcorner \, \,] \;=\; 0 \;=\; [\, \LL , \, v \cdot \,] \ . 
\eeq
For later use, let us state the following formula which holds for any vector $v\in \T M$ and symmetric tensor  $K\in \Sym^q (\T M)$:
\beq\label{power}
(L \cdot K)(v, \ldots, v) = (q+2)(q+1) K(v, \ldots, v) |v|^2 \ .
\eeq
Indeed, using \eqref{commu2} repeatedly we may write
$$
(\LL \cdot K) (v, \ldots, v) = \la \LL \cdot K, v^{q+2}\ra = \la K, \L v^{q+2}\ra = (q+2)(q+1) \la K, v^q\ra  |v|^2\ .
$$

We denote by
$
\Sym^p_0 \T M := \ker( \L : \Sym^p \T M \rightarrow \Sym^{p-2} \T M)
$
the space of trace-free symmetric $p$-tensors. The bundle of symmetric tensors splits as 
$$
\Sym^p \T M  \;\cong\;  \Sym^p_0 \T M \;\oplus\; \Sym^{p-2}_0 \T M \;\oplus\;  \ldots \ ,
$$
where the last summand in the decomposition is the trivial rank one bundle for $p$ even and the tangent bundle $\T M$ for $p$ odd.
Correspondingly we have for  any $K \in \Sym^p \T M$ the
decomposition 
$$
K \;=\;  K_0 \;+\;  \LL K_1 \;+\;  \LL ^2K_2 \;+ \; \ldots
$$
with $K_i \in \Sym^{p-2i}_0 \T M$, i.e. $\Lambda K_i = 0$, which is called the {\it standard decomposition} of $K$.
For any $v \in \T M$ and $K \in \Sym^p_0 \T M$ the following projection formula holds (cf. Eq. (3) in \cite{aku}):
\beq \label{projection}
(v \cdot K )_0 \;=\; v \cdot K \;-\; \tfrac{1}{n + 2 (p-1)}\, \LL  \,  (v \lrcorner \, K)\ .
\eeq
On sections of $\Sym^p \T M$ we define two first order differential operators, the differential
$$
\dd : \Gamma(\Sym^p \T M) \rightarrow \Gamma( \Sym^{p+1}\T M), \quad K \mapsto \sum e_i \cdot \nabla_{e_i}K \ ,
$$
and the formal adjoint of 
$\dd$,  the divergence operator  $\d$:
$$
\d : \Gamma(\Sym^{p+1} \T M) \rightarrow \Gamma(\Sym^{p}\T M), \quad K \mapsto -  \sum e_i \lrcorner\, \nabla_{e_i}K \ ,
$$
An important property of $\dd$ is that it acts as derivation on symmetric products. Moreover, $\dd$ commutes with $\LL\cdot$ and
for a section $K$ of $\Sym^p_0 \T M$ the projection of  $\dd K$ onto the trace-free part is given by the following formula (cf. \cite {aku}, Lemma 2.2):
\beq\label{dprojection}
(\dd K)_0 \;=\; \dd K \;+\; \tfrac{1}{n+2(p-1)}\,\LL  \, \d K \ .
\eeq

A symmetric tensor $K \in \Gamma(\Sym^p \T M)$ is called {\em conformal Killing tensor} if
there exists some symmetric tensor $k \in \Gamma( \Sym^{p-1}\T M)$ with
$\;
\dd  K = \LL  \cdot k
$.
It is called  {\em Killing tensor} if $\dd K = 0$ or equivalently if the symmetrized covariant derivative of $K$ 
vanishes or if $(\nabla_X K)(X,\ldots, X)= 0$ holds for all vector fields $X$. A Killing tensor is in particular a
conformal Killing tensor.

The defining equation for conformal Killing tensors is conformally invariant, i.e. a section $K$ of $ \Sym^{p} \T M$ is a conformal Killing tensor
with respect to the metric $g$, if and only if it is a conformal Killing tensor with respect to every conformally related metric $  \tilde g = e^{2 f} g$.
Indeed (cf. \cite{aku}, Lemma 3.3), the differential $\tilde \dd$ with respect to the metric $\tilde g$ is related to $\dd$ by 
\beq\label{dconformal}
\tilde \dd K \;=\; e^{-2f} \,(\dd K \,+\, \LL \cdot \dd f \lr K) \ .
\eeq

\section{Proof of the Theorem \ref{main}}

We consider the $n$-dimensional torus $T^n$ with the flat metric $g:= \dd x_1^2 + \ldots + \dd x_n^2$ and a conformally 
related metric $\tilde g := e^{2f}g$ such that $f = f(x_n)$ only depends on the last variable. Let $K \in \Gamma (\Sym^p TM)$ be a Killing tensor on $M:=(T^n, \tilde g)$. By \eqref{dconformal}, the Killing equation $\tilde\dd K=0$ translates into the equation
\begin{equation}\label{k0}\dd K = - \LL  \cdot (\dd f \lr K)
\end{equation} 
with respect to the conformally equivalent flat metric $g= e^{-2f}\tilde g$. 

We denote by $\xi_j:=\frac\partial{\partial x_j}$ the vector fields dual to $\dd x_j$, which form a global orthonormal frame on the flat torus $(T^n,g)$. In particular we have $\LL = \sum^n_{j=1}\xi^2_j$. For $j=1,\ldots, n-1$, $\xi_j$ are Killing vector fields with respect to  $\tilde g$ and parallel with respect to the flat metric $g$.

In \eqref{k0} we use the standard decomposition
$
K = \sum_{j \ge 0} \LL^j K_j
$
with $K_j \in \Gamma(\Sym^{p-2j}_0 \T M)$, together with the fact that $\dd$ commutes with $\LL$ and that
$\dd f \lr \LL = 2\, \dd f$, to obtain 
\bea
\sum_{j \ge 0} \, \LL^j  \,\dd K_j
&=&
- \, \LL \, \dd f \lr \sum_{j \ge 0} \, \LL^j \cdot  K_j
\;=\;
- \LL \sum_{j \ge 0} \left( 2 j  \,\dd f\cdot \LL^{j-1} \cdot K_j \;+\; \LL^j \cdot \dd f \lr K_j \right) \\
&=&
- \sum_{j \ge 0} \left(  2j \, \LL^j \cdot \dd f \cdot K_j \;+\; \LL^{j+1} \cdot \dd f \lr K_j \right) \ .
\eea
Using \eqref{dprojection} and \eqref{dconformal} in the equation above and comparing the trace-free coefficients of $\LL^j$ for every $j$ yields the system
\beaa\label{system}
&&
\dd K_j \;+\;\tfrac{1}{n+2(p-2j-1)} \, \LL \cdot \d K_j \;+\; 2j \, (\dd f \cdot K_j \,-\, \tfrac{1}{n+2(p-2j-1)} \, \LL \cdot \dd f \lr K_j) 
\\
&&
\qquad\qquad\qquad\qquad
= \; \tfrac{1}{n+2(p-2j+1)} \, \d K_{j-1} \;-\; \left(1+\tfrac{2(j-1)}{n+2(p-2j+1)}\right) \, \dd f \lr K_{j-1} \ . \nonumber
\eeaa

Here $j$ goes from $0$ to $ \floor{\frac{p}{2}}$, where we set as usual $K_{-1} = K_{\floor{\frac{p}{2}}+1} = 0$. In particular,
for $j=0$, we obtain as first equation
$$
\dd K_0 \;+\; \tfrac{1}{n+2(p-1)} \, \LL \cdot \d K_0 \;=\; 0 \ .
$$
Thus $K_0$ is a trace-free conformal Killing tensor with respect to $g$ and hence parallel because of  Proposition 6.6 from  \cite{aku}.

\begin{elem}\label{properties}
Let $\xi$ be a linear combination with constant coefficients of the $g$-parallel vector fields $\xi_1, \ldots, \xi_{n-1}$. We denote by $'$ the derivative in direction of $\xi_n$.  Then the
following holds
\begin{enumerate}
\item[(i)]
$\nabla_\xi K \;=\; 0$ and $ \nabla_\xi K_j \;=\; 0$ for all
$j$ 
\item[(ii)]
$\dd f = f' \, \xi_n, \quad   \nabla_\xi \,\dd  f \;=\; 0 $   \nonumber    
\item[(iii)]
$\dd K= \xi_n \cdot \nabla_{\xi_n} K, \quad \d K = - \xi_n \lr \nabla_{\xi_n} K$  \nonumber  
\item[(iv)]
$K(\xi, \ldots, \xi, \xi_n)_x \;=\; 0$ at all points $x\in T^n$ where $f'(x) \neq 0$ \nonumber.
\end{enumerate}
\end{elem}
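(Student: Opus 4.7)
Part (ii) is a direct calculation, the core of the lemma is (i), and parts (iii), (iv) follow quickly once (i) is in hand. For (ii), since $f=f(x_n)$ we have $\dd f = f'(x_n)\,\dd x_n$, identified with $f'\,\xi_n$ via the flat metric; writing $\xi=\sum_{j<n}c_j\,\xi_j$ for constants $c_j$, we get $\nabla_\xi\dd f = \xi(f')\,\xi_n + f'\,\nabla_\xi\xi_n = 0$, since $f'$ depends only on $x_n$ and $\xi_n$ is $g$-parallel.

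The heart of the argument is (i). Since $\xi$ is $g$-parallel on the flat torus, $\nabla_\xi$ commutes with each of the algebraic and differential operations $\dd$, $\lr$, $\cdot$, $\LL$ appearing in the Killing equation. Applying $\nabla_\xi$ to $\dd K + \LL\cdot(\dd f\lr K)=0$ and using $\nabla_\xi\dd f=0$ from (ii), one finds that $\nabla_\xi K$ satisfies the same equation and is thus itself a Killing tensor on $(T^n,\tilde g)$. The space of degree-$p$ Killing tensors is finite-dimensional on the compact torus, and the abelian group $T^{n-1}$ of translations in $x_1,\ldots,x_{n-1}$ acts on it; the plan is to show that only the weight-zero subspace is nonzero. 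Equivalently, Fourier-expanding $K=\sum_{\alpha'\in\mathbb Z^{n-1}}K^{\alpha'}(x_n)\,e^{i\alpha'\cdot x'}$, the translation-invariant structure of the equation decouples it into an ODE in $x_n$ for each $\alpha'$, and one must rule out non-trivial $x_n$-periodic solutions whenever $\alpha'\neq 0$. I expect to bootstrap off the already-used fact that $K_0$ is $g$-parallel (by Proposition 6.6 of \cite{aku}), whence $\nabla_\xi K_0=0$, and propagate this upwards through the coupled trace-level system displayed just above the lemma, inducting on $j$. The main obstacle is controlling the $\dd f$-coupling between successive $K_j$'s in the induction step.

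Part (iii) then follows from the orthonormal-frame formulas $\dd K=\sum_{i=1}^n \xi_i\cdot\nabla_{\xi_i}K$ and $\d K=-\sum_{i=1}^n \xi_i\lr\nabla_{\xi_i}K$: by (i) all terms with $i<n$ vanish. For (iv), evaluating $\dd K+\LL\cdot(\dd f\lr K)=0$ on $p+1$ copies of $\xi$ and using, analogously to the derivation of \eqref{power}, the identity $(w\cdot A)(v,\ldots,v)=(p+1)g(w,v)A(v,\ldots,v)$ for $A\in\Sym^p$, yields
\[
(p+1)(\nabla_\xi K)(\xi,\ldots,\xi) \;=\; -p(p+1)\,|\xi|^2\,f'(x_n)\,K(\xi,\ldots,\xi,\xi_n).
\]
By (i) the left-hand side vanishes, so wherever $f'(x)\neq 0$ and $\xi\neq 0$ one obtains $K(\xi,\ldots,\xi,\xi_n)_x=0$.
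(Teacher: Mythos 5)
Your parts (ii) and (iii) are correct and match the paper, and your derivation of (iv) is a fine, essentially equivalent variant of the paper's: the paper kills the left-hand side of the evaluated Killing equation using $\xi_n\perp\xi$ via (iii), while you kill it via (i) directly; both work. The problem is part (i), which is the real content of the lemma and which you have not actually proved. You correctly observe that $\nabla_\xi K=\LieD_\xi K$ is again a Killing tensor, but then you only describe a plan (Fourier decomposition in $x_1,\ldots,x_{n-1}$, ruling out nonzero frequencies through an ODE system in $x_n$) and you explicitly leave open ``the main obstacle,'' namely the $\dd f$-coupling between successive $K_j$'s in the induction. That obstacle is exactly where the work lies, so as written this is a gap, not a proof. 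Note also that finite-dimensionality of the space of Killing tensors plus the $T^{n-1}$-action does not by itself force the nonzero weight spaces to vanish; some genuinely new input is required to exclude them.

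The paper supplies that input by a different and more algebraic mechanism, with no Fourier analysis. Since $K_0$ is parallel (Proposition 6.6 of \cite{aku} applied to the trace-free conformal Killing tensor $K_0$ on the flat torus), $\LieD_\xi K_0=0$, so the standard decomposition of the Killing tensor $\LieD_\xi K$ reads $\LL\cdot(\LieD_\xi K_1+\LL\cdot\LieD_\xi K_2+\cdots)$. Because $\LL\cdot$ is injective and commutes with $\dd$, the tensor $\LieD_\xi K_1+\LL\cdot\LieD_\xi K_2+\cdots$ is itself Killing, with trace-free part $\LieD_\xi K_1$; applying Proposition 6.6 again shows $\LieD_\xi K_1$ is parallel, hence $\LieD^2_\xi K_1=0$. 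Compactness of the torus then enters through an integration by parts,
\begin{equation*}
0\;=\;\int_{T^n}g(\LieD_\xi^2K_1,K_1)\,\dd\mu\;=\;-\int_{T^n}|\LieD_\xi K_1|^2\,\dd\mu\ ,
\end{equation*}
which upgrades $\LieD_\xi^2K_1=0$ to $\LieD_\xi K_1=0$; an induction on $j$ then gives $\LieD_\xi K_j=0$ for all $j$ and hence $\LieD_\xi K=0$. To complete your write-up you must either carry out your frequency analysis in full (including the coupled terms you flag) or, more simply, adopt this divide-by-$\LL$, apply-parallelism, integrate-by-parts argument.
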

\proof
$(i)$ Since $\nabla K_0 = 0$, $\nabla \xi = 0$ and 
\begin{equation}\label{ld}\LieD_\xi = \nabla_\xi - (\nabla \xi)_* = \nabla_\xi, 
\end{equation}
it follows $\LieD_\xi K_0 =0$.
The Lie derivative $\LieD_\xi$ preserves the space of trace-free tensors as well as that of Killing tensors and of course $\LieD_\xi\LL=0$.
Hence, taking the Lie derivative in the standard decomposition $K = \sum_{j\ge 0}\LL^j K_j$, we obtain that the standard decomposition of the Killing tensor $\LieD_\xi K$ is
\begin{equation}\label{sd}
\LieD_\xi K \;=\; \LL \cdot ( \LieD_\xi K_1 + \LL \cdot  \LieD_\xi K_2 + \ldots  )\ .
\end{equation}
The operator $\LL\cdot$ is injective and commutes with differential $\dd$.
Thus a symmetric tensor $Q$ is Killing if and only if $\LL \cdot Q$ is Killing. From \eqref{sd}, we conclude that
$\LieD_\xi K_1 + \LL \cdot  \LieD_\xi K_2 + \ldots$ is Killing with trace-free part $\LieD_\xi K_1$. Repeating the argument 
above we get $\LieD^2_\xi K_1 = 0$. Taking the scalar product with $K_1$ and integrating over $(T^n,g)$ yields:
$$0=\int_{T^n} g(\LieD^2_\xi K_1,K_1)\dd\mu=\int_{T^n} \LieD_\xi (g(\LieD_\xi K_1,K_1))-g(\LieD_\xi K_1,\LieD_\xi K_1)\dd\mu=-\int_{T^n}|\LieD_\xi K_1|^2\dd\mu$$
since, $\xi$ being parallel, the integral over $M$ of $\xi(\psi)$ vanishes for every function $\psi$ by the Stokes formula. This shows that $\LieD_\xi K_1 = 0$.
By immediate induction using \eqref{sd} we obtain $\LieD_\xi K_j = 0$ for all $j$ and finally also $\LieD_\xi K = 0$. We conclude using \eqref{ld} again.

$(ii)$ The function $f$ only depends on the last coordinate $x_n$, i.e.  $\dd f =f'(x_n) \xi_n$. Thus $\nabla_\xi \,\dd  f \;=\; f'(x_n) \nabla_\xi \,\xi_n \;=\;  0 $. 

$(iii)$ is a direct consequence of $(i)$ and $(ii)$.

$(iv)$
Using $(iii)$ and \eqref{k0}
we get
$
(\xi_n \cdot \nabla_{\xi_n} K)(\xi, \ldots, \xi) = - (\LL \cdot f'\,  \xi_n \lr K )(\xi, \ldots, \xi) 
$.
Since $\xi_n$ is orthogonal to $\xi$, we obtain by \eqref{power} that
$
0 = - p(p+1)f'K(\xi, \ldots, \xi, \xi_n)
$
and the statement follows.
\qed

\bigskip

We now introduce  the functions 
$$\alpha_j := K_j(\xi, \ldots, \xi, \xi_n)=g(K_j,\xi^{p-2j-1}\cdot\xi_n),$$ 
for $0\le j\le \floor{\frac{p-1}{2}}$, where $\xi$ is as above a linear
combination with constant coefficients of the $g$-parallel vector fields $\xi_1,\ldots,\xi_{n-1}$. For
convenience we assume $\xi$ to have constant length one.
Note that $\alpha_0$ is constant since  $K_0$ is parallel. We want
to show that  all $\alpha_j$ have to vanish. First we find that
$$
(\d K_j)(\xi, \ldots, \xi) \; =\; - \,(\xi_n \lr \nabla_{\xi_n} K_j) (\xi, \ldots, \xi) \;=\; -\, \alpha'_j \ .
$$
Since $\xi$ is orthogonal to $\xi_n$ we have
$$
(\dd K_j)(\xi, \ldots, \xi) \; =\; (\xi_n \cdot \nabla_{\xi_n} K_j) (\xi, \ldots, \xi) \;=\; 0 \ .
$$
Taking the scalar product with $\xi^{p-2j+1}$ in \eqref{system} and using \eqref{power} we obtain for $0\le j\le \floor{\frac{p-1}{2}}$
\beq\label{eqj}
 \tfrac{(p-2j+1)(p-2j)}{n+2(p-2j-1)}  \, \alpha_j' \;+\; \tfrac{2j(p-2j+1)(p-2j)}{n+2(p-2j-1)} \, f'\,  \alpha_j
 \;=\;
 \tfrac{1}{n+2(p-2j+1)} \alpha_{j-1}'  \; +\;  \tfrac{n+2p-2j}{n+2(p-2j+1)} \, f'\,  \alpha_{j-1}\ .
\eeq
From this system of ODE's it will follow that the functions $\alpha_j$ are polynomials in $e^{-2f}$ of degree $j$.
Indeed we have

\begin{elem} \label{ode_system}
  Let $\set{b_j, c_j\suchthat 0 \le j \le l}$ be real constants and let $\set{f, \alpha_j \suchthat 0 \le j \le l}$ be a set of
  smooth real valued functions on an open interval $I \subset \R$ satisfying 
  following system of differential equations
  \begin{equation*}
    \alpha_j'  \;+\; 2 j \, f'\, \alpha_j   \;=\; b_j \,  \alpha_{j-1}' \;+\; c_j \,  f' \,  \alpha_{j-1}\ .
  \end{equation*}
Assume moreover that $\alpha_0$ is a constant. 
  Then every $\alpha_j$ is either identically zero or a polynomial in $e^{-2 f}$ of degree $j$.
\end{elem}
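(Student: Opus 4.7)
My plan is to argue by induction on $j$. The base case $j=0$ is immediate from the hypothesis that $\alpha_0$ is constant, i.e., a polynomial of degree $0$ in $e^{-2f}$. For the inductive step, the key observation is that the linear first-order ODE
\[
\alpha_j' + 2jf'\alpha_j \;=\; b_j\alpha_{j-1}' + c_j f'\alpha_{j-1}
\]
admits $e^{2jf}$ as integrating factor, so that it rewrites as
\[
\frac{d}{dx}\!\left(e^{2jf}\alpha_j\right) \;=\; e^{2jf}\bigl(b_j\alpha_{j-1}' + c_j f'\alpha_{j-1}\bigr).
\]
The inverse $e^{-2jf}$ of this integrating factor is precisely the top-degree monomial we want to see appear in $\alpha_j$, which makes this setup tailor-made for the induction.

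Assuming by induction that $\alpha_{j-1}=\sum_{k=0}^{j-1} a_k e^{-2kf}$ with constant coefficients $a_k$, a direct computation reduces the right-hand side above to $\sum_{k=0}^{j-1} a_k(c_j-2kb_j)\,f' e^{2(j-k)f}$. Since every exponent $2(j-k)$ is strictly positive, each term $f' e^{2(j-k)f}$ is an exact derivative, equal to $\tfrac{1}{2(j-k)}\tfrac{d}{dx} e^{2(j-k)f}$; this nondegeneracy is what blocks logarithmic terms and keeps us in the polynomial world. Integrating once and dividing by $e^{2jf}$ then gives
\[
\alpha_j \;=\; C\, e^{-2jf} \;+\; \sum_{k=0}^{j-1}\frac{a_k(c_j - 2kb_j)}{2(j-k)}\, e^{-2kf}
\]
for some integration constant $C$, which is manifestly a polynomial in $e^{-2f}$ of degree at most $j$, completing the induction.

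The only delicate point is the ``exact degree $j$'' phrasing in the statement: strictly speaking the induction produces degree at most $j$, with the coefficient of $e^{-2jf}$ being the free integration constant $C$ (and equal to $C$ regardless of whether the lower coefficients happen to vanish). I would interpret the lemma's conclusion accordingly, since the subsequent application in the proof of Theorem~\ref{main} only exploits the polynomial form $\alpha_j = \sum_{k=0}^{j} A_k e^{-2kf}$ with constant coefficients. I do not anticipate any real obstacle; the argument is essentially the observation that the system is arranged so that, at each level $j$, the natural integrating factor matches exactly the next exponential one needs to produce.
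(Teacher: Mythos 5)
Your proof is correct and is essentially the paper's own argument: the paper likewise multiplies the $j$-th equation by $\varphi^j=e^{2jf}$, rewrites the right-hand side as $f'$ times a polynomial in $\varphi$ (hence an exact derivative of a polynomial in $\varphi$), and integrates once; you merely carry the coefficients of the inductive polynomial explicitly instead of keeping it abstract. Your caveat about ``degree exactly $j$'' versus ``degree at most $j$'' is well taken, but it applies equally to the paper's proof, whose integration step also only produces degree at most $j$ (the top coefficient being the integration constant).
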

\begin{proof}
  The statement is proved by induction. It is true for $j=0$ since $\alpha_0$ is constant.
 We  set $\varphi = e^{2 f}$. Assume the statement to be true for all $k$ with $0 \le k \le j-1 < l$. Multiplying
  the equation for $\alpha_j$ with $\varphi^j$ and using $2 f' \varphi = \varphi'$ we get
  \begin{equation} \label{eq:multiplied_ode}
    \parens{\varphi^j \alpha_j}'
    \;=\; b_j  \varphi^j \alpha_{j-1}' \;+\; c_j f' \varphi^j \alpha_{j-1}.
  \end{equation}
  By assumption, either $\alpha_{j-1}=0$, or there is a polynomial $P$ of degree $(j-1)$ with  $\varphi^{j-1} \alpha_{j-1} = P ( \varphi)   $.
  In the first case, \eqref{eq:multiplied_ode} implies that $\varphi^j \alpha_j$ is constant and thus the induction hypothesis follows.
  Otherwise we have 
  \begin{equation*}
    \varphi^{j-1} \alpha_{j-1}'
      \;=\; 2 f' \varphi \,P' (\varphi) \;-\; 2 (j-1) f' \, P(\varphi)
      \;=\; f' \, Q (\varphi) \ ,
  \end{equation*}
  where $Q$ is a polynomial of degree $(j-1)$. Substituting this into
  (\ref{eq:multiplied_ode})  we arrive at
  \begin{equation*}
    \parens{\varphi^j \alpha_j}'
    \;=\; \tfrac{b_j}{2} \, Q ( \varphi) \, \varphi'
    \;+\; \tfrac{c_j}{2} \, P (\varphi ) \, \varphi'.
  \end{equation*}
  Finally, integrating this equation shows that $\varphi^j \alpha_j$ is polynomial
  of degree $j$ in $\varphi$,  proving the
  induction hypothesis for every $j \le l$.
\end{proof}

An easy calculation shows that $\Lambda(\xi^k\cdot\xi_n)=k(k-1)\xi^{k-2}\cdot\xi_n$ for $k\ge 2$ and therefore 
$$g(\LL^j \cdot K_j,\xi^{p-1}\cdot\xi_n)=d_jg(K_j,\xi^{p-2j-1}\cdot\xi_n)$$
for $0\le j\le \floor{\frac{p-1}{2}}$, where $d_j:=\frac{(p-1)!}{(p-2j-1)!}$. Moreover, $g(\LL^j \cdot K_j,\xi^{p-1}\cdot\xi_n)=0$ if $p$ is even and $j=\frac p2$. Taking the scalar product with $\xi^{p-1}\cdot\xi_n$ in the standard decomposition $K = \sum \LL^j K_j$ thus yields
$$K(\xi, \ldots, \xi, \xi_n)=\sum_{j=0}^{\floor{\frac{p-1}{2}}}d_j\alpha_j\ .$$
By Lemma \ref{properties} $(iv)$, the left hand side vanishes on the open set of points with $f' \neq 0$. 
Since by \eqref{eqj} and Lemma \ref{ode_system}, each $\alpha_j$ is either zero or a polynomial in $e^{-2f}$ of degree $j$, this equation can only hold if 
all $\alpha_j$ vanish identically. This shows that 
  \begin{equation}\label{kj}
K_j(\xi, \ldots, \xi, \xi_n)= 0,\qquad\forall j\le  \floor{\frac{p-1}{2}}
\end{equation}
on the open set where $f' \neq 0$.

Next we expand $K_0$ in powers of $\xi_n$, i.e. $K_0 = \sum_{j \ge 0} \xi_n^j \cdot P_j$, where every $P_j$ is a  polynomial
of degree $p-j$ in the vector fields $\xi_1,\ldots,\xi_{n-1}$. 
Moreover $P_1= 0$ by \eqref{kj}.
Applying the contraction $\L$ to the expansion of $K_0$ leads to the following relation 
of polynomials
  \begin{equation*}
    0 \;=\; \Lambda K_0 \;=\; \sum_{j \ge 0} \Lambda\, (\xi_n^j \cdot P_j)
    \;=\; \Lambda P_0 
      \;+\; \sum_{j \ge 2} j (j-1) \xi_n^{j-2} \cdot P_j
        \;+\; \xi_n^j \cdot \Lambda P_j \ .
  \end{equation*}
 Comparing the coefficients of $\xi_n^j$, this implies by immediate induction that $P_j = 0$ for every odd $j$ with $1 \le j \le p$. Hence
 $K_0$ is a polynomial in $\xi_1, \ldots, \xi_{n-1}$ and $\xi_n^2$. The same argument
 applies for all tensors $K_j$ from the standard decomposition $K = \sum_{j\ge 0} \LL^j \cdot K_j$. Hence we can write
 $
 K_j = \sum_{k\ge 0} P_{jk} \, \xi_n^{2k}
 $,
 where $P_{jk}$ are polynomials in $\xi_1, \ldots, \xi_{n-1}$ of degree $p-2j-2k$. By Lemma \ref{properties} $(i)$, the coefficients of $P_{jk}$ are constant in the variables $x_1,\ldots, x_{n-1}$. We now rewrite 
 \begin{equation}\label{q}
 K = \sum_{j,k \ge 0} \, \LL^j \cdot P_{jk} \cdot \xi_n^{2k} = \sum_{j,k\ge 0}\, \LL^j \cdot P_{jk}\cdot (\LL - \sum^{n-1}_{a=1}\xi_a^2 )^k
 = \sum_{k\ge 0} \tilde\LL^k \cdot Q_k  \ ,
 \end{equation}
 where $\tilde\LL = e^{-2f} \LL$ and $Q_k$ is a polynomial in $\xi_1, \ldots, \xi_{n-1}$ whose coefficients are constant in $x_1,\ldots,x_{n-1}$ but may depend on $x_n$. Since $ \nabla_{\xi_n} \tilde\LL=-2f'\tilde\LL$, we compute from \eqref{q}:
\begin{equation}\label{q1}
  \nabla_{\xi_n} K \;=\; \sum_{k\ge 0} \, (-2k \, f'\, \tilde\LL^k \cdot Q_k  \,+\, \tilde\LL^k \cdot \nabla_{\xi_n} Q_k)
 \;=\; \sum_{k\ge 0} \,\tilde\LL^k \cdot ( \nabla_{\xi_n} Q_k -2k \, f'\, Q_k ) \ .
 \end{equation}
 On the other side, since $\xi_n \lr \tilde\LL = 2 e^{-2f} \xi_n$, we have
\begin{equation}\label{q2}
 \dd f \lr K \;=\; f'\, \xi_n \lr K \;=\; \sum_{k\ge 0} \, 2k \, f' \, e^{-2f}\, \xi_n \cdot \tilde\LL^{k-1}\cdot Q_{k} \ .
 \end{equation}
Now, the original equation \eqref{k0} together with Lemma \ref{properties} yields $$\xi_n\cdot\nabla_{\xi_n}K=\dd K = - e^{2f}\tilde\LL \cdot \dd f \lr K$$ and taking \eqref{q}--\eqref{q2} into account, this leads to
 $$
\xi_n\cdot \sum_{k\ge 0} \tilde\LL^k \cdot ( \nabla_{\xi_n} Q_k -2k \, f'\, Q_k ) 
 \;=\;
 -\sum_{k\ge 0} \, 2k \, f' \,  \xi_n \cdot\, \tilde\LL^{k}\cdot  Q_{k} \ .
 $$
 It follows that $\xi_n\cdot \sum_{k\ge 0}  \tilde\LL^k \cdot \nabla_{\xi_n} Q_k = 0$ and thus 
 $\sum_{k\ge 0}  \tilde\LL^k \cdot \nabla_{\xi_n} Q_k = 0$. Since $Q_k$ are polynomials in $\xi_1, \ldots, \xi_{n-1}$, this leads to $\nabla_{\xi_n} Q_k = 0$
 for all $k$, i.e. $Q_k$ are polynomials in $\xi_1, \ldots, \xi_{n-1}$ with constant coefficients. It 
follows that $K$ itself is a polynomial in $\xi_1, \ldots, \xi_{n-1}$ and $\tilde\LL$ with constant coefficients on any connected component of the open set where $f'\neq 0$. But since the Killing equation is of finite type, two Killing tensors which coincide on some non-empty open set must coincide everywhere.
This proves the statement of Theorem \ref{main}. \hfill
$\Box$



\begin{thebibliography}{5}


\bibitem{bia87}
M. L. Byalyi,
{\sl First integrals that are polynomial in the momenta for a mechanical system on the two-dimensional torus},
Funktsional. Anal. i Prilozhen. {\bf 21} (1987), no. 4, 64--65.

\bibitem{bofoma98}
A. V. Bolsinov, V. S. Matveev, A. T.  Fomenko, 
{\sl Two-dimensional Riemannian metrics with an integrable geodesic flow. Local and global geometries},
Mat. Sb. {\bf 189} (1998), no. 10, 5--32; translation in Sb. Math. 189 (1998), no. 9-10, 1441--1466.


\bibitem{aku}
K. Heil, A. Moroianu, U. Semmelmann,
{\sl Killing and Conformal Killing tensors},
J. Geom. Phys. {\bf 106} (2016), 383--400.


\bibitem{kolo82}
V. N. Kolokoltsov,
{\sl Geodesic flows on two-dimensional manifolds with an additional first integral that is polynomial with respect to velocities},
 Izv. Akad. Nauk SSSR Ser. Mat. {\bf 46} (1982), no. 5, 994--1010, 1135.

\bibitem{koz95}
V. V. Kozlov, N. V. Denisova, 
{\sl Polynomial integrals of geodesic flows on a two-dimensional torus},
Mat. Sb. {\bf 185} (1994), no. {12}, 49--64; translation in Russian Acad. Sci. Sb. Math. {\bf 83} (1995), no. 2, 469--481.

\bibitem{koz85}
V. V. Kozlov, D. V. Treshchev, 
{\sl Nonintegrability of the general problem of rotation of a dynamically symmetric heavy rigid body with a fixed point},
Vestnik Moskov. Univ. Ser. I Mat. Mekh. (1985), no. {6}, 73--81, 111--112.

\bibitem{mat11}
V. S. Matveev, V. V. Shevchishin,
{\sl  Two-dimensional superintegrable metrics with one linear and one cubic integral}, 
J. Geom. Phys. {\bf 61} (2011), no. 8, 1353--1377. 

\end{thebibliography}
\end{document}